\newtheorem{de}{Definition}
\newtheorem{Corollary}{Corollary}
\newcommand{\dgam}{d_{\gamma}}
 \journalname{}
\begin{document}
\begin{center}{{\LARGE {\bf Lion and Man Game in Compact Spaces}}\\
\ \\
Yufereva Olga\\ 
yufereva12@gmail.com}
\end{center}%\thanks{Grants or other notes
%about the article that should go on the front page should be
%placed here. General acknowledgments should be placed at the end of the article.}

%for running head

\author{Yufereva Olga}

\institute{O. Yufereva \at
              Krasovskii Institute of Mathematics and Mechanics UrB RAS, 16, Kovalevskoy str., Ekaterinburg 620990, Russia \\
%              Tel.: +7-343-3753443\\
 %             Fax: +7-343-3742581\\
              \email{yufereva12@gmail.com}             \\
              Chair of Applied Mathematics and Mechanics, Institute of Mathematics and Computer Science, Ural Federal University, 4, Turgeneva str.,  Ekaterinburg 620083, Russia
%Tel.: +7-343-3758012\\
%Fax.: +7-343-3509778
 %            \emph{Present address:} of F. Author  %  if needed
         }
%\date{Received: date / Accepted: date}
% The correct dates will be entered by the editor
%\maketitle

\begin{abstract}
The pursuit-evasion game with two persons is considered. Both players are moving in a metric space, have equal maximum speeds and  complete information about the location of each other.  We study the sufficient conditions for a capture (with a positive capture radius). We assume that Lion wins if he manages the capture independently of the initial positions of the players and the evader's strategy. We prove that  the discrete-time simple pursuit strategy is a Lion's winning strategy in a compact geodesic space $(K, d)$ satisfying the betweenness property. %i.e. if for all points $A, B, C, D \in K$ the equalities $d(A, B)+ d(B, C) = d(A, C)$ and  $d(B, C)+ d(C, D) = d(B, D)$ imply
%the equalities $d(A, B)+ d(B, D) = d(A, D)$ and  $d(A, C)+ d(C, D) = d(A, D)\bigr).$
 In particular, it means that Lion wins in compact CAT(0)-spaces,  Ptolemy spaces, Buseman convex spaces or any geodesic space with convex metric.
 We also do not need to use such properties as  finite dimension, smoothness, boundary regularity  or contractibility of the loops. %Examples of the spaces with successful pursuit strategy are considered, in particular, we investigate compact $CAT(0)$-spaces and compact subsets of~$\mathbb{R}^{n}$ with~$l_p$-norms.

\keywords{pursuit-evasion game \and lion-and-man game \and simple pursuit \and betweenness \and convex metrics }
% \PACS{PACS code1 \and PACS code2 \and more}
 \subclass{91A23 \and 91A24 \and 49N75}
\end{abstract}
\section{Introduction}
%\label{intro}

In this article we study a pursuit-evasion game known as lion-and-man game.
Both players have equal capabilities and opposite goals. Namely Lion wants to catch Man, and Man wants to evade. There are many papers on pursuit-evasion games; see \cite{isaacs1999differential,petrosjan1993differential,pontryagin1966theory,chernous1976problem,ivanov1981optimality} for example. There is also a modern survey in \cite{kumkov}. 

 In the classical case the phase space is the unit closed disk in~$\mathbb{R}^{2}$. Besicovitch showed that the evader can escape the pursuer infinitely long \cite{littlewood1986mathematician}. But the pursuer can achieve arbitrarily small  positive distance to the evader,
for instance, using simple pursuit strategy. It is called $\varepsilon$-capture. We further say that Lion wins if he is able to 
get $\varepsilon$-capture.
%reduce the distance between the players down to any wanted positive number. 
Such an approach commonly appears in applications.
 %Due to this, we say the pursuer wins if he can reduce the distance between the players down to any positive $\varepsilon.$ The simple pursuit strategy implies that Lion wins in this classical case and in this article we use a simple pursuit too.

With regard to applications, lion-and-man game is used, e.g., by \cite{bramson2014rubber} to study Brownian motions and to show  relations between this game and such  metric properties as contractibility of loops.
% A lot of examples of subsets of~$\mathbb{R}^n$ are also presented, metric properties and the result of the lion-and-man game are described.
 Let us further note that works like \cite{noori2015lion} may benefit the robotics community.
 Similar papers \cite{karnad2009lion,tovar2008visibility,stiffler2012shortest}, which are devoted to this area,  differ in the visibility of players or capabilities of their motions.

Let us point out several directions of theoretical researches. One specific pursuer's strategy on convex terrains  is described in  \cite{noori2015lion}.
 Another Lion's strategy was introduced in \cite{sgall2001solution} to consider a pursuit within the non-negative quadrant of the plane. This was a discrete-time version that later was generalized for CAT(0)-spaces in \cite{beveridge2015two} (CAT(0)-space is an Alexandrov space with non-positive curvature, see more in \cite{bridson2011metric}). There is an important result of \cite{alexander2010total}: the simple pursuit leads to capture in compact CAT(0)-spaces; this paper became a classical one. In \cite{bramson2014rubber}, this result was expanded on finite-dimension  CAT($\kappa$)-spaces of sufficiently small diameters.

  %In contrast with the above-mentioned articles, we show that compactness and such  geodesic's property as betweenness directly implies that simple pursuit strategy conducts to the Lion's win.
%  In contrast with the above-mentioned articles,
In this article,  
   we show that simple pursuit strategy implies $\varepsilon$-capture even in geodesic compact spaces with such a geometric property as convex metric or, more general, in compact spaces satisfying the betweenness property (see Definition \ref{betweenness}). As a corollary, we have $\varepsilon$-capture in compact CAT(0)-spaces,  Ptolemy spaces, Buseman convex spaces. 
 % We use neither of the such properties as  finite dimension, boundary regularity, contractibility of the loops, being CAT($\kappa$) or Euclidean spaces. !!
  %On the other hand,
It means  
   our result implies the result from \cite{alexander2010total} for compact CAT(0)-spaces, the result of \cite[Theorem 4.6]{bramson2014rubber} for finite-dimension  CAT($\kappa$)-spaces of sufficiently small diameter. We also provide an example of the space that is not CAT(0), but still satisfies the betweenness property (Section \ref{sec:2}, example 3).

More general case --- topological spaces are studied in the recent work \cite{Barmak.2017}. But in such a way players may move along any continuous curves (speeds can not be calculated) and a capture means the equality of players' positions. 
 It brings to the existing of winning non-anticipative strategies of both players in the game on a circle (by Proposition 1 and 2 from \cite{Barmak.2017}). Similar situation (but in metric spaces) was deeply considered in the work \cite{Bollobas.2012}, which, in particular, offers ways to avoid such problems. One of them consists in the using  a discrete-time  strategy (at least by a one player). We follow this way employing a discrete-time simple pursuit strategy.

%Note that we want Lion to win independently of the Man's strategy and initial players' positions.% In this way we can assume that Man knows the Lion's strategy in advance  and can calculate their positions at an arbitrary time.  We analyse some possible players' trajectories and their limit properties. So, we need the compactness to consider such a limit point.
 %This way allow us to deal with the admissible trajectories, analyse their own properties and their limit properties. From viewpoint of game theory  we let Man know Lion's strategy in advance  and hence Man can calculate both players' positions at a future time.

This  paper is organized as follows.
In Section 2 we present simple examples of the game. Section~3 is devoted to properties of geodesic segments (including betweenness property). In Subsection~\ref{sect of dyn}~and~\ref{sect of strategy} we present  constraints on a metric space, dynamics and Lion's strategy. In Subsection~\ref{sect of results} we formulate the main results: Proposition~\ref{th2} as describing of trajectories' behaviour, Theorem~\ref{th betw} as a  result of game theory and Corollary~\ref{th convex} as an important special case. The~whole Section~5 is devoted to the proof of Proposition~\ref{th2}.

%\section{Simple Examples of the Games.}
\section{Simple Examples}
\label{sec:2}
%Remember, we suppose that Lion uses the simple pursuit strategy.
We suppose that Lion wins if at some time moment $t^{\ast}\in [0, +\infty)$ the distance between $x(t^{\ast})$ and $ y(t^{\ast})$ is less than $ \varepsilon.$  Man wins if such $t^{\ast}$ does not exist.
Consider the games corresponding to the following dynamics of players:
 \begin{eqnarray} %eqnarray %equation
 \label{dynamic}
\begin{array}{lcl}
Lion: \quad &\dot{x} = u, \qquad &x(0) = x^{0}, \\
Man: \quad &\dot{y} = v, \qquad &y(0) = y^{0},  \\
&x, y \in K \subset \mathbb{R}^n, & \\
&u, v \in Q \subset \mathbb{R}^n, &
\end{array}
 \end{eqnarray}
 the sets $K$ and $Q$ are defined specially in the each following example.

\paragraph{Example 1} Let $K$ be the plane $\mathbb{R}^{2}, \quad
Q = \{(q_1, q_2) \in \mathbb{R}^2  \ | \ q_1^2 + q_1^2 \leq 1  \}.$
%$Q = \Big\{(a_1, a_2,\ldots,a_n) \in \mathbb{R}^n  \, \Big| \, \sum_{i=1}^{n}{|a_i|^p} \leq 1  \Big\}.$
\\
It is easy to see that Man wins by moving in the direction away from the Lion's initial position.
A similar strategy leads to the Man's win in a lot of unbounded sets $K$.  Though an example of capture on an unbounded space is reported \cite{bavcak2012note},  we will later assume the compactness of~$K,$ as in~\cite{alexander2010total,bramson2014rubber}.

\paragraph{Example 2} Put
%$K$ is the ball  $B^1 \subset \mathbb{R}^2, \quad
$$K = \{(x_1, x_2)\in \mathbb{R}^2 \mid x_1^2+x_2^2\leq 1 \}, \quad 
 Q = \{(q_1, q_2) \in \mathbb{R}^2  \ | \ q_1^2 + q_1^2 \leq 1  \}.$$
Here Lion wins both if he uses continuous-time simple pursuit strategy (\cite{alexander2010total}) and if he uses discrete-time one (by Theorem \ref{th betw}).

\paragraph{Example 3} Put
$$K = \{(x_1, x_2)\in \mathbb{R}^2 \mid x_1^2+x_2^2\leq 1 \}, \quad 
 Q = \{(q_1, q_2) \in \mathbb{R}^2  \ | \ \max\{|q_1|, |q_2|\} \leq 1  \}.$$

This kind of the set $Q$ generates $l_{\infty}$-metric. So, we may suppose that players move in the metric space $(K, d),$ where $d$ is $l_{\infty}$ metric.
Note that $d$ is a convex metric. Then Lion wins here by Corollary \ref{th convex}.

Unlike Example 2,  \cite{alexander2010total} can not be employed to this example, cause $(K, d)$ is not an  CAT(0)-space by the following reason.  Every CAT(0)-space $(K', d')$ guarantees the Ptolemy inequality: 
\begin{eqnarray*}
d'(x, z)d'(y, w) \leq d'(x,y)d'(z,w) + d'(x, w)d'(y, z)
\end{eqnarray*}
for every points $x, y, z, w \in K',$ that here does not hold for $x = (0,1), y=(1, 0), z=(0, -1), w=(0, -1)$ for instance.

\section{Geodesic Segments}
\label{Section of Geodesic Segment}

In this section we recall some definiton and properties of geodesic spaces.  The definitions from the list  below  can be found in \cite{bridson2011metric} or \cite{papadopoulos} too. 
\begin{itemize}
        \item  A map $\gamma \colon [a,b]\subset \mathbb{R} \to K$ with $\gamma(a) = A, \gamma(b)=B$ is called  a {\it geodesic path joining $A$ to $ B $} (for $A,B\in K$) if $d \bigl(\gamma(t), \gamma(t')\bigr) = |t-t'|$ for all $t, t' \in [a,b]$.
        \item A metric space $(K, d)$ is called {\it a geodesic space} if every pair of points $A$, $B \in K$ can be joined by a geodesic path.
        \item The image of a geodesic path $\gamma$ joining $A$ to $ B $ is called {\it a geodesic segment  with the endpoints $A$ and $B$}. 
		\item  For any geodesic segment with the endpoints $A$ and $B$ and a point $C$ from it we have the equality $d(A, B) = d(A, C) + d(C, B).$
        \item We say $C$ {\it lies between $A$ and $B$} if the equality $d(A, B) = d(A, C) + d(C, B)$ holds.
\end{itemize}
%All the following remarks given in this section hold true in a unique geodesic space.
In geodesic space we also have that if a point $C$ lies between points $A$ and $B,$ then there exists a geodesic segment with the endpoints $A$ and $B$ that contains the point $C.$
\begin{definition}
\label{betweenness}
A geodesic space $(K, d)$ is called  satisfying the betweenness property if for every four pairwise distinct points $A, B, C, D \in K$
the following implication holds true: if 
\begin{eqnarray*}
B \mbox{ lies between } A \mbox{ and }  C \quad \bigl(\mbox{i.e. } d(A, B)+ d(B, C) = d(A, C)\bigr), \\
C \mbox{ lies between } B \mbox{ and }  D  \quad \bigl(\mbox{i.e. } d(B, C)+ d(C, D) = d(B, D)\bigr),
\end{eqnarray*}
then $B$ and $C$ lie between $A$ and $D.$
\end{definition}

\begin{remark}
\label{geod-trans}
Let $(K,d)$ is a  geodesic space satisfying the betweenness property. If, for all $A, B, C, D \in K,$  
\begin{eqnarray*}
B \mbox{ lies between } A \mbox{ and }  D,  \\
C \mbox{ lies between } B \mbox{ and }  D,  
\end{eqnarray*}
then $C$ lies between  $A$ and  $D.$  
\end{remark}
\begin{proof}
The hypothesis and the triangle inequality yield the following chain:
\begin{eqnarray*}
d(A, D) = d(A,B) + d(B, D) = d(A, B) + d(B, C) + d(C, D) \\
\geq d(A, C) + d(C, D) \geq d(A, D).
\end{eqnarray*}
It follows $d(A,C) + d(C, B) = d(A, B),$ i.e. $C$ lies between $A$ and $D.$

\end{proof}
 \begin{lemma}
 \label{limit of geodesic's sequence}
 In  a compact geodesic space $(K, d),$ a sequence of the same-length geodesic segments with endpoints $A_n, B_n$ (for $n \in \mathbb{N}$) has a limit point that is a geodesic segment.
 \end{lemma}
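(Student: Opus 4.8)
The plan is to parametrize each segment by its unit-speed geodesic path and then apply the Arzel\`a--Ascoli theorem. First I would fix the common length $L = d(A_n, B_n)$ and write each segment as the image of a geodesic path $\gamma_n \colon [0, L] \to K$ with $\gamma_n(0) = A_n$ and $\gamma_n(L) = B_n$. By the defining property of a geodesic path, each $\gamma_n$ is an isometric embedding, hence in particular $1$-Lipschitz, so the family $\{\gamma_n\}_{n \in \mathbb{N}}$ is equicontinuous on the compact interval $[0,L]$. Since $K$ is compact, every value $\gamma_n(t)$ lies in the compact set $K$, so the family is also pointwise relatively compact.

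Next I would invoke the Arzel\`a--Ascoli theorem for maps from the compact interval $[0,L]$ into the compact metric space $(K,d)$: equicontinuity together with pointwise precompactness of the values yields a subsequence $\gamma_{n_k}$ converging uniformly to some continuous map $\gamma \colon [0,L] \to K$. It then remains to verify that the limit map $\gamma$ is itself a geodesic path. For any fixed $t, t' \in [0,L]$ we have $d\bigl(\gamma_{n_k}(t), \gamma_{n_k}(t')\bigr) = |t - t'|$ for every $k$; letting $k \to \infty$ and using the continuity of the metric $d$ gives $d\bigl(\gamma(t), \gamma(t')\bigr) = |t - t'|$. Hence $\gamma$ is a geodesic path joining $\lim A_{n_k}$ to $\lim B_{n_k}$, and its image is a geodesic segment of length $L$.

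Finally I would record that this limiting segment is genuinely a limit point of the original sequence: the uniform convergence $\gamma_{n_k} \to \gamma$ forces the images $\gamma_{n_k}([0,L])$ to converge to $\gamma([0,L])$ in the Hausdorff metric, so $\gamma([0,L])$ is the desired geodesic segment. The only step that requires care is the use of Arzel\`a--Ascoli in the metric-space-valued setting, but this is standard once equicontinuity (from the $1$-Lipschitz property) and compactness of the target are in hand; no further structure of $K$ --- and in particular not the betweenness property --- is needed for this lemma.
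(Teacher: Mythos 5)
Your proof is correct and follows essentially the same route as the paper: parametrize each segment by a unit-speed geodesic path, note the resulting family is $1$-Lipschitz hence equicontinuous with values in the compact space $K$, and apply Arzel\`a--Ascoli. Your verification that the limit is a geodesic --- passing the identity $d\bigl(\gamma_{n_k}(t), \gamma_{n_k}(t')\bigr) = |t-t'|$ to the limit by continuity of $d$ --- is in fact cleaner than the paper's argument, which asserts the limit is naturally parametrized and then handles only the endpoint distance by contradiction.
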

 \begin{proof}
All these geodesic segments are the images of geodesic paths $\gamma_n \colon [0,l] \to K$. It means that each $\gamma_n$ is parametrized by the arc length hence all these geodesic paths $\gamma_n$ are equicontinuous. Since $K$ is compact, they are uniformly bounded. So, in accordance with Arzela--Askoli theorem, the sequence of $\gamma_n$ has a subsequence that converges to a continuous map $\gamma_*$. As all $\gamma_n$ are from the segment $[0, l]$ to~$K$ and also natural parametrized, then the map $\gamma_*$ is too from $[0, l]$ and is natural parametrized. 
Denote $A_* = \gamma_*(0), \ B_* = \gamma_*(l).$ Notice that the contrary assumption (i.e. $\gamma_*$ is not a geodesic path) implies the inequality $d(A_*, B_*) =l_* <l,$ that yields the existing of a number $n$ such that  $d(A_*, A_n) < (l-l_*)/2$ and $ d(B_*, B_n) < (l-l_*)/2.$ In the lights of this there exists a path from $A_n$ to $B_n$ through $A_*$ and $B_*$ the length of which is $l_* + d(A_*, A_n) +d(B_*, B_n) < l.$ It contradicts with the hypothesis and complete the proof.
 \end{proof}

\section{General Statement}
\subsection{Dynamics}
\label{sect of dyn}
Recall that players move in a metric space $(K, d).$ % and 
 We choose  restrictions of players' speeds   according to the original posedness and simultaneously to common sense.
Namely, we impose that players' trajectories $L \colon \mathbb{R}^+ \to K$ (Lion's one), $M \colon \mathbb{R}^+ \to K$ (Man's one) satisfy
$$ d \bigl(L(t_1), L(t_2)\bigr) \leq |t_1 - t_2|, \quad d \bigl(M(t_1), M(t_2)\bigr) \leq |t_1 - t_2|  \quad \forall t_{1}, t_{2} \geq 0.$$

 Lion wins if he can reduce the distance between him and Man down to any chosen in advance positive number $\varepsilon.$
 Man can choose his trajectory arbitrarily among those which satisfying the restrictions above. Moreover, we assume that he knows Lion's strategy and their initial positions so, as soon as we set $\varepsilon$ and players' initial positions, we will get both players' strategies and whole trajectories.  
Let us introduce Lion's strategy with a fixed $\varepsilon.$

\subsection{Description of the Lion's Strategy}
\label{sect of strategy}

Simple pursuit strategies are quite natural ones and seems to be interested for studying.  Our results are based on the discrete-time version of these strategies. More precise, we consider trajectories which are generated by $\varepsilon$-simple pursuit strategy as follows.
\begin{de}
A curve $\zeta(\cdot)= \bigl(L(\cdot), M(\cdot)\bigr)$ from $\mathbb{R}_{+}$ to $K^2,$ where $L(\cdot) $ and $M(\cdot) $ are 1-Lipschitz continuous,  is called the $\varepsilon$-simple-pursuit  curve  iff
for each $i \in \mathbb{N}\cup \{0\}$
the condition  $d\bigl(L(i\varepsilon), M(i\varepsilon)\bigr)\geq \varepsilon$ implies
\begin{eqnarray*}
d\bigl(L(i \varepsilon), L((i+1)\varepsilon)\bigr)=\varepsilon, \\
L\bigl((i+1)\varepsilon\bigr) \mbox{ lies between } L\bigl(i\varepsilon\bigr) \mbox{ and } M(i\varepsilon).
\end{eqnarray*}
\end{de}

So, the main idea of the Lion's strategy is in the taking aim to the current Man's position in  some time moments. 
Because of the (potential) non-uniqueness of geodesics in the space~$K,$ Man does not know which of geodesic segments Lion will choose on the each step. Simultaneously we want to suppose that Lion's strategy is predetermined  and Man can in advance calculate whole Lion's trajectory that would be generated by the Man's own moving. Therefore we propose a way which entails this determination. 

Let us denote by $\mathcal{G}_{AB}$ the set of all geodesic segments with some fixed endpoints $A, B \in K. $ For all pair of points from $K$ such a set is non-empty  (for the same points we get that the geodesic segment degenerates into a point). Then let us define a set $\mathcal{G}$ using the axiom of choice as follows: $\mathcal{G}$ is a subset of the set of all geodesic segments from $K$ and $\mathcal{G}$ has only one element in common with each of $\mathcal{G}_{AB}$ for all $A, B \in K.$ The set $\mathcal{G}$ meaningfully is Lion's possible paths.

Let us explain  the announced  Lion's strategy in detail. While the distance between players is more than $\varepsilon$ let Lion move in the following way.
Firstly, at the initial time moment, Lion knows players' initial positions $L(0), M(0)$ and till the moment $\varepsilon$ he should move along the geodesic segment with endpoints $L(0)$ and $M(0)$ from the set $\mathcal{G}.$ Secondly, at time $\varepsilon,$ he should find the geodesic segment with endpoints $L(\varepsilon)$ and $M(\varepsilon)$ from the set $\mathcal{G}$ and move along it till $2\varepsilon$ time moment. Etcetera, at time $i\varepsilon$ (where $i \in \mathbb{N}$) Lion changes his moving according to the current players' positions.
Cause we will consider players' trajectories as curves from $\mathbb{R}$ to $K$ then we  should  consider the formally case $d\bigl(L(i\varepsilon), M(i\varepsilon)\bigr) < \varepsilon.$ In this case, Lion analogise should  move along the geodesic segment with endpoints $L(i\varepsilon)$ and $M(i\varepsilon)$ from the set $\mathcal{G},$ but when he manages the point $M(i\varepsilon)$ he should stay in this point till $(i+1)\varepsilon$ time moment. 

Thus, this strategy   can be realised by force of a choice of $\mathcal{G}.$ Obviously, this Lion's strategy  generates $\varepsilon$-simple-pursuit curve independent of Man's strategy.

\subsection{Main results}
\label{sect of results}

Studying $\varepsilon$-simple pursuit curves, we obtained the following result:
\begin{proposition}
\label{th2}
If  a compact geodesic space $(K,d)$  satisfies the betweenness property, then, for each $\varepsilon$-simple-pursuit curve $\zeta(\cdot) = \bigl(L(\cdot), M(\cdot)\bigr) \colon \mathbb{R}_+ \to K^2,$  there exists a number $T>0$ such that $d\bigl(L(T), M(T)\bigr)< \varepsilon.$
\end{proposition}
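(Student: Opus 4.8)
The plan is to argue by contradiction: suppose that no capture ever occurs, i.e. $d\bigl(L(t),M(t)\bigr)\ge\varepsilon$ for every $t\ge 0$. Write $\ell_i=L(i\varepsilon)$, $m_i=M(i\varepsilon)$ and $d_i=d(\ell_i,m_i)$. Under this assumption the defining property of an $\varepsilon$-simple-pursuit curve fires at every grid step, so $d(\ell_i,\ell_{i+1})=\varepsilon$ and $\ell_{i+1}$ lies between $\ell_i$ and $m_i$; hence $d(\ell_{i+1},m_i)=d_i-\varepsilon$. Combining this with the $1$-Lipschitz bound $d(m_i,m_{i+1})\le\varepsilon$ and the triangle inequality gives $d_{i+1}\le(d_i-\varepsilon)+\varepsilon=d_i$. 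Thus $(d_i)$ is non-increasing and bounded below by $\varepsilon$, so it converges to some $d_*\ge\varepsilon$; moreover, summing $d_i-d_{i+1}\ge\varepsilon-d(m_i,m_{i+1})\ge 0$ shows $d(m_i,m_{i+1})\to\varepsilon$, i.e. Man is asymptotically forced to move at full speed directly away.

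First I would extract a rigid limiting configuration. Using compactness of $K$ (hence of $K^2$) with a diagonal argument over the time-shifts $i\mapsto i+n$, and Lemma~\ref{limit of geodesic's sequence} to carry the connecting geodesic segments to the limit, I obtain limit trajectories $L^\infty,M^\infty$ and points $\ell_j^\infty,m_j^\infty$ along a subsequence $n_k\to\infty$ for which every inequality above becomes an equality: $d(\ell_j^\infty,m_j^\infty)=d_*$, $d(\ell_j^\infty,\ell_{j+1}^\infty)=d(m_j^\infty,m_{j+1}^\infty)=\varepsilon$, and $\ell_{j+1}^\infty$ lies between $\ell_j^\infty$ and $m_j^\infty$ (betweenness is a closed condition). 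Since then $d(\ell_{j+1}^\infty,m_j^\infty)=d_*-\varepsilon$, equality in the triangle inequality forces $m_j^\infty$ to lie between $\ell_{j+1}^\infty$ and $m_{j+1}^\infty$. The no-capture hypothesis also survives the limit, so $d\bigl(L^\infty(t),M^\infty(t)\bigr)\ge\varepsilon$ for all $t$.

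The heart of the argument is a chaining via the betweenness property. From ``$\ell_{j+1}^\infty$ between $\ell_j^\infty$ and $m_j^\infty$'' and ``$m_j^\infty$ between $\ell_{j+1}^\infty$ and $m_{j+1}^\infty$'', Definition~\ref{betweenness} yields that both $\ell_{j+1}^\infty$ and $m_j^\infty$ lie between $\ell_j^\infty$ and $m_{j+1}^\infty$, so $d(\ell_j^\infty,m_{j+1}^\infty)=d_*+\varepsilon$. Feeding this into an induction on $n$, and using Definition~\ref{betweenness} together with Remark~\ref{geod-trans} to splice consecutive betweenness relations that share an endpoint, I expect to obtain that $\ell_0^\infty,\ell_1^\infty,\dots,\ell_n^\infty,m_n^\infty$ lie, in this order, on a single geodesic with $d(\ell_0^\infty,m_n^\infty)=d_*+n\varepsilon$. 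As $n\to\infty$ this is unbounded, contradicting the finiteness of $\operatorname{diam}K$, which holds since $K$ is compact. This contradiction would prove the Proposition.

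The step I expect to be the main obstacle is keeping the betweenness chaining valid when points coincide, i.e. the degenerate case $d_*=\varepsilon$. There $d(\ell_{j+1}^\infty,m_j^\infty)=0$, so $\ell_{j+1}^\infty=m_j^\infty$ and the grid-level betweenness relations collapse to trivialities; indeed an oscillation between two points $a,b$ with $d(a,b)=\varepsilon$ shows that the grid data alone no longer force alignment (such an oscillation is nonetheless captured \emph{between} grid times, which is exactly why the full continuous hypothesis is needed). To handle this I would invoke no-capture within a single limiting step: parametrising $L^\infty$ along the geodesic from $\ell_j^\infty$ to $m_j^\infty=\ell_{j+1}^\infty$ gives $d\bigl(L^\infty(s),m_j^\infty\bigr)=\varepsilon-s$, and with $d\bigl(m_j^\infty,M^\infty(s)\bigr)\le s$ the bound $d\bigl(L^\infty(s),M^\infty(s)\bigr)\ge\varepsilon$ must be an equality for all $s\in[0,\varepsilon]$. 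Applying Definition~\ref{betweenness} to $\ell_j^\infty$, $L^\infty(s)$, $m_j^\infty$, $M^\infty(s)$ and letting $s\to\varepsilon$ then forces $m_j^\infty$ to lie between $\ell_j^\infty$ and $m_{j+1}^\infty$, i.e. the consecutive triple $\ell_j^\infty,\ell_{j+1}^\infty,\ell_{j+2}^\infty$ is collinear; chaining these triples through Definition~\ref{betweenness} again produces a geodesic with $d(\ell_0^\infty,\ell_n^\infty)=n\varepsilon\to\infty$, yielding the same contradiction.
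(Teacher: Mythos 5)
Your proposal is correct, but it takes a genuinely different route from the paper's proof. The paper never passes to the limit $d_*=\lim_i d(\ell_i,m_i)$ of the non-increasing distance sequence; instead it introduces \emph{good curves}, runs a pigeonhole (``cages'') argument to extract countably many same-length \emph{rounds} of $\zeta$ starting and ending in a set of diameter $<\varepsilon$, takes an Arzel\`a--Ascoli limit of those rounds, and then proves a rigidity statement (Proposition~\ref{lem-2.2}, itself an induction via a time-shifted curve $\eta$) showing that the limit Lion path is a geodesic of length $m\varepsilon\ge 2\varepsilon$ --- contradicting that its endpoints lie at distance $<\varepsilon$. You instead observe that convergence of $d_i$ makes \emph{every} step asymptotically tight, so a single time-shift limit yields a globally rigid configuration, and the betweenness chaining gives $d\bigl(\ell_0^\infty,m_n^\infty\bigr)=d_*+n\varepsilon\to\infty$, contradicting the boundedness of a compact space; this eliminates the rounds/cage combinatorics entirely and trades the paper's local contradiction for a global one. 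I verified your chaining: with inductive hypothesis ``$\ell_j^\infty$ lies between $\ell_0^\infty$ and $m_j^\infty$, and $d\bigl(\ell_0^\infty,m_j^\infty\bigr)=d_*+j\varepsilon$'', Remark~\ref{geod-trans} gives $\ell_{j+1}^\infty$ between $\ell_0^\infty$ and $m_j^\infty$, and one application of Definition~\ref{betweenness} to $\bigl(\ell_0^\infty,\ell_{j+1}^\infty,m_j^\infty,m_{j+1}^\infty\bigr)$ closes the step, with all four points pairwise distinct precisely because $d_*>\varepsilon$ in your main case. Notably, your separate treatment of the degenerate case $d_*=\varepsilon$ is not only necessary but is a point where your argument is \emph{more} careful than the paper's: the published proof of Proposition~\ref{lem-1} (statement 2 $\Rightarrow$ 3) invokes the four-point betweenness property at the points $L(\tau_i)$, $L(\tau_{i+1})$, $M(\tau_i)$, $M(\tau_{i+1})$, which fail to be pairwise distinct exactly when $d_\gamma(\tau_i)=\varepsilon$ (your two-point oscillation is a good curve witnessing that statement 3 can then genuinely fail), whereas your use of the continuous-time no-capture bound within a single limiting step --- legitimate, since $d\bigl(L^\infty(t),M^\infty(t)\bigr)\ge\varepsilon$ survives the pointwise limit at all times, not just grid times --- resolves the degeneracy correctly.
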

Proof of Proposition~\ref{th2} takes the whole Section~\ref{sect-proof}. However, since we consider $\varepsilon$-capture and have Lion's strategy generating a $\varepsilon$-simple-pursuit curve (see the previous subsection), it means this proposition directly implies the following result for lion and man game:
%Cause we consider a capture with non-zero radius, this implies the following:
%\begin{Corollary}
\begin{theorem}
\label{th betw}
In a compact geodesic space satisfying the betweenness property Lion has a winning non-anticipative discrete-time strategy.
\end{theorem}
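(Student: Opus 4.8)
The plan is to derive Theorem~\ref{th betw} as an essentially immediate consequence of Proposition~\ref{th2}, since the hard analytic work lies entirely in the Proposition. The key observation is that the theorem is a game-theoretic statement (Lion \emph{has} a winning strategy), whereas Proposition~\ref{th2} is a statement about the trajectories themselves (\emph{every} $\varepsilon$-simple-pursuit curve eventually enters the $\varepsilon$-ball). So the work of the proof is to connect these two registers: to exhibit a concrete Lion strategy, verify it has the required non-anticipative and discrete-time character, and confirm that whatever Man does, the resulting joint trajectory falls under the scope of Proposition~\ref{th2}.

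First I would fix an arbitrary target radius $\varepsilon>0$ (the quantity Lion must achieve), and invoke the explicit strategy constructed in Subsection~\ref{sect of strategy}. That construction uses the axiom of choice to fix a selection $\mathcal{G}$ containing exactly one geodesic segment from each class $\mathcal{G}_{AB}$; Lion then, at each discrete time $i\varepsilon$, reads off the current positions $L(i\varepsilon), M(i\varepsilon)$ and moves at unit speed along the selected segment toward $M(i\varepsilon)$ until time $(i+1)\varepsilon$ (stopping early at $M(i\varepsilon)$ in the degenerate case $d\bigl(L(i\varepsilon),M(i\varepsilon)\bigr)<\varepsilon$). I would emphasize the two features this strategy enjoys by design. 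It is \emph{non-anticipative}: Lion's motion on $[i\varepsilon,(i+1)\varepsilon)$ depends only on the positions already observed at time $i\varepsilon$, never on future positions of Man. And it is \emph{discrete-time}: Lion reconsiders his heading only at the grid points $i\varepsilon$, which is precisely the mechanism flagged in the introduction (following \cite{Bollobas.2012}) for avoiding the pathologies of continuous-time play.

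Next I would verify the crucial compatibility claim: for \emph{any} admissible Man trajectory $M(\cdot)$ (1-Lipschitz, with $M$ knowing Lion's strategy and both initial positions), the joint curve $\zeta(\cdot)=\bigl(L(\cdot),M(\cdot)\bigr)$ produced by the strategy is an $\varepsilon$-simple-pursuit curve in the sense of the Definition. This is a matter of checking the defining conditions step by step: both $L$ and $M$ are 1-Lipschitz by the dynamics constraints of Subsection~\ref{sect of dyn}; and whenever $d\bigl(L(i\varepsilon),M(i\varepsilon)\bigr)\geq\varepsilon$, the strategy forces $d\bigl(L(i\varepsilon),L((i+1)\varepsilon)\bigr)=\varepsilon$ and forces $L((i+1)\varepsilon)$ to lie on a genuine geodesic between $L(i\varepsilon)$ and $M(i\varepsilon)$, hence to lie between them in the metric sense. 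The point I would stress is that this holds \emph{independently of Man's choices}, because the strategy's behaviour on each interval is determined solely by the observed endpoints and the fixed selection $\mathcal{G}$; Man's foreknowledge cannot help him escape the structural conclusion.

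Finally I would apply Proposition~\ref{th2}. Since $(K,d)$ is a compact geodesic space satisfying the betweenness property and $\zeta$ is an $\varepsilon$-simple-pursuit curve, the Proposition guarantees a time $T>0$ with $d\bigl(L(T),M(T)\bigr)<\varepsilon$, which is exactly an $\varepsilon$-capture and so a win for Lion. As this argument made no assumption on $M$ beyond admissibility, the strategy wins against every Man, establishing the theorem. The main obstacle is not in this deduction at all — it is entirely inherited by Proposition~\ref{th2}, whose proof must control the long-run behaviour of the betweenness relation along the discrete pursuit; the present theorem's only genuine content is the bookkeeping that the axiom-of-choice construction yields a legitimately non-anticipative strategy and that its every play is covered by the Proposition.
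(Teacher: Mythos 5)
Your proposal is correct and takes essentially the same approach as the paper: the paper likewise treats Theorem~\ref{th betw} as an immediate consequence of Proposition~\ref{th2}, observing that the strategy of Subsection~\ref{sect of strategy} (with the axiom-of-choice selection $\mathcal{G}$) generates an $\varepsilon$-simple-pursuit curve against every admissible Man, so that the Proposition yields $\varepsilon$-capture. Your explicit verification of the non-anticipative, discrete-time character and of the compatibility of each play with the Proposition is precisely the bookkeeping the paper performs, only stated more briefly there.
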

%\end{Corollary}

There are many special spaces satisfying betweenness property: Busemann convex spaces (by Proposition 8.2.4 from \cite{papadopoulos}), Ptolemy spaces (by Proposition 3.3 from \cite{Nicolae}), geodesic spaces with convex metric (by Proposition 3.4 from \cite{Nicolae}). Let us point out one of them.
%\begin{Observation}
\begin{Corollary}
\label{th convex}
  Lion also has a winning non-anticipative discrete-time strategy in a compact geodesic space  with a convex metric.
\end{Corollary}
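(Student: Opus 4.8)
The Corollary asserts that Lion wins in any compact geodesic space with a convex metric. Since Theorem~\ref{th betw} already establishes that Lion has a winning non-anticipative discrete-time strategy in every compact geodesic space satisfying the betweenness property, the only thing that remains is to verify that a geodesic space with a convex metric in fact satisfies the betweenness property of Definition~\ref{betweenness}. Once that implication is in hand, the conclusion follows by direct application of the theorem, with no further game-theoretic argument needed.

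First I would invoke the known structural fact, cited in the text immediately preceding the statement, that a geodesic space with a convex metric satisfies the betweenness property (Proposition 3.4 from \cite{Nicolae}). This is the substantive input: convexity of the metric means that for any two geodesics $\gamma_1, \gamma_2 \colon [0,1] \to K$ (suitably reparametrized to the unit interval), the map $t \mapsto d\bigl(\gamma_1(t), \gamma_2(t)\bigr)$ is convex, and one shows that this forces the collinearity-type transitivity demanded by the betweenness property: if $B$ lies between $A$ and $C$, and $C$ lies between $B$ and $D$, then both $B$ and $C$ lie between $A$ and $D$.

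Having established that $(K,d)$ satisfies the betweenness property, I would simply note that $(K,d)$ is, by hypothesis, a compact geodesic space. Therefore it meets the two standing assumptions of Theorem~\ref{th betw}, namely compactness and the betweenness property. Applying that theorem verbatim yields that Lion has a winning non-anticipative discrete-time strategy in $(K,d)$, which is precisely the assertion of the Corollary.

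I do not anticipate a genuine obstacle here, since the Corollary is a specialization rather than a new result; the entire content lies in the implication ``convex metric $\Rightarrow$ betweenness property,'' and that is supplied externally by \cite{Nicolae}. The only point requiring minor care is making sure the cited proposition applies to geodesic spaces in the sense defined in Section~\ref{Section of Geodesic Segment} (so that ``lies between'' is interpreted through the metric equality $d(A,B)+d(B,C)=d(A,C)$, matching Definition~\ref{betweenness}), but this is a matter of aligning definitions rather than proving anything new.
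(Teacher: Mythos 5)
Your proposal is correct and matches the paper's own reasoning exactly: the paper states Corollary~\ref{th convex} as an immediate consequence of Theorem~\ref{th betw}, citing Proposition~3.4 of \cite{Nicolae} for the implication that a geodesic space with a convex metric satisfies the betweenness property. Your added care about aligning the metric notion of ``lies between'' with Definition~\ref{betweenness} is sensible but introduces nothing beyond the paper's argument.
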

%\end{Observation}

\section{Proof of Theorem 1}
\label{sect-proof}
%Let us show that Lion gets a capture. 
Suppose the contrary.
Namely, we suppose that there exists a positive number $\varepsilon$  and Man's trajectory~$\mathcal{M}$ such that chosen strategy generates Lion's trajectory~$\mathcal{L}$ and consequently the $\varepsilon$-simple-pursuit curve  $\zeta(\cdot)= \bigl(\mathcal{L}(\cdot), \mathcal{M}(\cdot)\bigr)$ with the property $d\bigl(\mathcal{L}(t), \mathcal{M}(t)\bigr)>\varepsilon$ for all $ t\geq 0.$ These special denotation will be used in subsections \ref{sect of rounds}--\ref{sect of contradiction}, whereas in subsection \ref{sect of good} and \ref{sect of behaviour} we will prove supporting statements unrelated to the curve $\zeta$.  
In subsection \ref{sect of contradiction} we will show that this assumption implies a contradiction.

Since we fixed the number $\varepsilon, $ let us denote the sequence of control correction moments by $\Delta = \{\tau_i\}_{i=0}^{\infty} = \{i\varepsilon\}_{i=0}^{\infty}.$ Moreover it is convenient to set the metric $\rho$ on $K^2$ as follows:
  for all $ A = (A_1, A_2)\in K^2, \  B=(B_1, B_2) \in K^2$ (and respectively $A_1, A_2, B_1, B_2 \in K$) 
  $$\rho(A,B) = max \bigl\{d(A_{1},\ B_{1}), d(A_{2},\ B_{2})\bigr\}.$$

\subsection{Good Curves}
\label{sect of good}

By our definition, $\zeta(\cdot)=\bigl(\mathcal{L}(\cdot), \mathcal{M}(\cdot)\bigr)$ is the $\varepsilon$-simple-pursuit curve along which the players will go.
But we want to consider   segments of any $\varepsilon$-simple-pursuit curves with `no $\varepsilon$-capture' property.
 Let us formulate it rigorously.
\begin{de} A curve $\gamma(\cdot) = \bigl(L(\cdot), M(\cdot)\bigr) : [\tau_a, \tau_b] \rightarrow K^2 $ where  $\tau_a, \tau_b \in \Delta \cup \{\infty\} , \ \tau_a <\tau_b$ is called {\it a good curve} iff
\begin{flalign*}
1.\ & L(\cdot), M(\cdot) \mbox{ are 1-Lipschitz continuous, } &\\
2. \ & L(\tau_{i+1}) \mbox{ lies between } L(\tau_i) \mbox{ and } M(\tau_i) &&\mbox{for \ }\tau_a \leq \tau_i < \tau_b,&\\
3.\ & d\bigl(L(\tau_i), L(\tau_{i+1})\bigr) = \varepsilon   &&\mbox{for \ }\tau_a \leq \tau_i < \tau_b,& \\
4.\ & d\bigl(L(\tau_i),M(\tau_{i})\bigr) \geq \varepsilon   &&\mbox{for \ }\tau_a \leq \tau_i \leq \tau_b.&
\end{flalign*}
\end{de}
So, we will consider good curves properties and employ them to describing  behaviour of the considering curve $\zeta(\cdot).$ 

\begin{lemma}
	\label{limit of good curves}
	A sequence of good curves
	$$\psi^n(\cdot) = \bigl(\psi^n_L(\cdot), \psi^n_M(\cdot) \bigr): [0, \tau_i] \rightarrow K^2$$
	has a subsequence that converges to a good curve $$\psi(\cdot) = \bigl(\psi_L(\cdot), \psi_M(\cdot) \bigr): [0, \tau_i] \rightarrow K^2.$$
\end{lemma}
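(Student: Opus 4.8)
The plan is to apply the Arzel\`a--Ascoli theorem exactly as in Lemma~\ref{limit of geodesic's sequence}, and then to check that each of the four defining conditions of a good curve survives passage to the limit. First I would regard each $\psi^n$ as a single map into the compact space $K^2$ equipped with the metric $\rho$. Condition~1 (each coordinate is 1-Lipschitz) gives, via the definition of $\rho$ as a maximum, that $\rho(\psi^n(s), \psi^n(t)) \leq |s-t|$ for all $s,t \in [0,\tau_i]$; hence the family $\{\psi^n\}$ is equicontinuous. Since $K$ is compact, so is $K^2$, and the family is pointwise relatively compact. Arzel\`a--Ascoli then yields a subsequence (not relabelled) converging uniformly on $[0,\tau_i]$ to a continuous map $\psi = (\psi_L, \psi_M) \colon [0,\tau_i] \to K^2$.

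It remains to verify that $\psi$ is itself a good curve. For condition~1, a uniform (indeed pointwise) limit of 1-Lipschitz maps is again 1-Lipschitz, so $\psi_L$ and $\psi_M$ are 1-Lipschitz. For the remaining conditions I would use that they are imposed only at the finitely many correction moments $\tau_0, \ldots, \tau_i \in \Delta$, and that uniform convergence forces $\psi^n_L(\tau_j) \to \psi_L(\tau_j)$ and $\psi^n_M(\tau_j) \to \psi_M(\tau_j)$ for each $j$. Since the metric $d$ is continuous, passing to the limit in the equality $d(\psi^n_L(\tau_j), \psi^n_L(\tau_{j+1})) = \varepsilon$ gives condition~3, and passing to the limit in the inequality $d(\psi^n_L(\tau_j), \psi^n_M(\tau_j)) \geq \varepsilon$ preserves it, giving condition~4.

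For condition~2 I would rewrite the betweenness relation ``$\psi^n_L(\tau_{j+1})$ lies between $\psi^n_L(\tau_j)$ and $\psi^n_M(\tau_j)$'' as the equality
$$d\bigl(\psi^n_L(\tau_j), \psi^n_L(\tau_{j+1})\bigr) + d\bigl(\psi^n_L(\tau_{j+1}), \psi^n_M(\tau_j)\bigr) = d\bigl(\psi^n_L(\tau_j), \psi^n_M(\tau_j)\bigr),$$
which is a closed condition: each term is a continuous function of the sampled points, so the equality holds in the limit and $\psi_L(\tau_{j+1})$ lies between $\psi_L(\tau_j)$ and $\psi_M(\tau_j)$. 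This shows $\psi$ satisfies all four conditions and is therefore a good curve.

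The step requiring the most care is the last paragraph: I expect the only genuine content beyond routine equicontinuity estimates to be the observation that ``lies between'' is an equality of continuous distances, hence a closed relation stable under limits. Everything else is the standard Arzel\`a--Ascoli argument already used for geodesic segments, the one subtlety being to track that all of the relations in conditions~2--4 are evaluated on the \emph{fixed, finite} set of correction moments, so that only finitely many limit passages are needed.
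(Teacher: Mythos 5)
Your proof is correct, and it differs from the paper's at exactly one point, where it is in fact more elementary. The compactness step is identical: the paper also views $\psi^n(\cdot)$ as a 1-Lipschitz map into the compact space $(K^2,\rho)$, applies Arzel\`a--Ascoli, and notes that 1-Lipschitzness (Item~1) and the closed inequality of Item~4 pass to the limit by continuity of $d$. The divergence is in Items~2 and~3: the paper disposes of them by invoking Lemma~\ref{limit of geodesic's sequence} (a limit of same-length geodesic segments is a geodesic segment), implicitly using that Items~1 and~3 force $\psi^n_L$ to be a geodesic path on each step interval $[\tau_j,\tau_{j+1}]$; you instead observe that both items are equalities of distances evaluated at the finitely many correction moments --- in particular that ``$B$ lies between $A$ and $C$'' is, by the paper's own definition, the closed relation $d(A,B)+d(B,C)=d(A,C)$ --- and hence survive uniform (indeed pointwise) convergence directly. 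Your observation is sound: the definition of a good curve constrains the curve only through metric identities at sample times, so the geodesic-limit machinery is not needed here (that $\psi_L$ is automatically geodesic on each $[\tau_j,\tau_{j+1}]$ then follows from Items~1 and~3 by the triangle inequality, without a separate check). What the paper's route buys is reuse of a lemma it proves anyway; what yours buys is a shorter, self-contained, purely metric verification in which only finitely many limit passages occur --- a point you correctly flag as the one place requiring care.
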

\begin{proof}
	Since each $\psi^n_L(\cdot)$ and $\psi^n_M(\cdot)$ are 1-Lipschitz continuous, each $\psi^n(\cdot)$ is 1-Lipschitz continuous too (by the definition of the metric $\rho$). In addition, $K^2$ is compact hence all of $\psi^n(\cdot)$ are both equicontinuous and uniformly bounded. So this sequence has a limit point by Arzela--Askoli theorem. Note that $\psi_L(\cdot)$ and $\psi_M(\cdot)$ are 1-Lipschitz continuous, as well as each of $\psi^n_L(\cdot)$ and $\psi^n_M(\cdot)$. In the same way we get Item 4 of good curves' definition. The remaining items follow from~Lemma~\ref{limit of geodesic's sequence}. Thus $\psi(\cdot)$ is a good curve indeed.
	
\end{proof}

\subsection{Behaviour of Distance between Players}
\label{sect of behaviour}

 In this subsection we illustrate properties of good curves.
  Let us denote by $d_{\gamma}(t)$ the distance between the components of a good curve $\gamma(\cdot)$ at time $t.$

Proposition~\ref{lem-1} uses the idea of the triangle inequality from \cite{alexander2010total}.
\begin{proposition}
\label{lem-1}
Let $(K,d)$ be a geodesic space satisfying the betweenness property. Let $\gamma(\cdot)=\bigl(L(\cdot), M(\cdot)\bigr) \colon [\tau_a, \tau_b] \to K$ be a good curve, $[\tau_i, \tau_{i+1}] \subset [\tau_a,\tau_b].$
The following statements are equivalent:
\begin{enumerate}
\item{$\forall t \in [\tau_i, \tau_{i+1}] \ \dgam(t) = \dgam(\tau_i),$}
\item{$\dgam(\tau_i) = \dgam(\tau_{i+1}),$}
\item{$d\bigl(M(\tau_i), M(\tau_{i+1})\bigr)=\varepsilon$ and $ M(\tau_i)$ lies between  $L(\tau_i)$ and $ M(\tau_{i+1})$ }
\end{enumerate}
\end{proposition}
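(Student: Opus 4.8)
The plan is to prove the three conditions equivalent along the cycle $(1)\Rightarrow(2)\Rightarrow(3)\Rightarrow(1)$. Throughout I would abbreviate $A=L(\tau_i)$, $A'=L(\tau_{i+1})$, $B=M(\tau_i)$, $B'=M(\tau_{i+1})$, and recall $\tau_{i+1}-\tau_i=\varepsilon$, so that $\dgam(\tau_i)=d(A,B)$ and $\dgam(\tau_{i+1})=d(A',B')$. The implication $(1)\Rightarrow(2)$ is immediate: simply specialize the hypothesis to $t=\tau_{i+1}$.

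For $(2)\Rightarrow(3)$ I would first extract what the good-curve data give locally. Item~3 of the good-curve definition yields $d(A,A')=\varepsilon$, and item~2 says $A'$ lies between $A$ and $B$, whence $d(A',B)=d(A,B)-\varepsilon=\dgam(\tau_i)-\varepsilon$; the $1$-Lipschitz bound on $M$ over an interval of length $\varepsilon$ gives $d(B,B')\le\varepsilon$. The triangle inequality then produces $d(A',B')\le d(A',B)+d(B,B')\le\dgam(\tau_i)$. Since hypothesis $(2)$ forces $d(A',B')=\dgam(\tau_{i+1})=\dgam(\tau_i)$, both inequalities are equalities, giving $d(B,B')=\varepsilon$ (the first half of $(3)$) together with $d(A',B')=d(A',B)+d(B,B')$, i.e. $B$ lies between $A'$ and $B'$. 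The remaining task is to upgrade ``$A'$ between $A$ and $B$'' plus ``$B$ between $A'$ and $B'$'' into ``$B$ between $A$ and $B'$'', which is exactly an instance of the betweenness property (Definition~\ref{betweenness}) applied to $A,A',B,B'$.

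For $(3)\Rightarrow(1)$ I would note that $d(A,A')=\varepsilon$ and $d(B,B')=\varepsilon$, combined with $1$-Lipschitzness on an interval of length $\varepsilon$, force $L$ and $M$ to restrict to unit-speed geodesics there; writing $t=\tau_i+s$ with $s\in[0,\varepsilon]$, one has $d(A,L(t))=s$ with $L(t)$ between $A$ and $A'$, and $d(B,M(t))=s$ with $M(t)$ between $B$ and $B'$. Using only the triangle inequality (the same computation as in Remark~\ref{geod-trans}, in both nesting orientations) I would successively obtain that $L(t)$ lies between $A$ and $B$, that $B$ lies between $L(t)$ and $B'$, and finally that $M(t)$ lies between $L(t)$ and $B'$. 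The last relation gives $d(L(t),M(t))=d(L(t),B')-d(M(t),B')=\bigl(d(A,B)-s+\varepsilon\bigr)-\bigl(\varepsilon-s\bigr)=d(A,B)=\dgam(\tau_i)$, which is $(1)$.

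The main obstacle is the closing step of $(2)\Rightarrow(3)$: this is the \emph{only} place where the genuine betweenness property is invoked rather than the bare triangle inequality, and Definition~\ref{betweenness} requires the four points $A,A',B,B'$ to be pairwise distinct. Distinctness is automatic once $\dgam>\varepsilon$ strictly, since then $d(A',B)=\dgam(\tau_i)-\varepsilon>0$ and $d(A,B')\ge d(A',B')-d(A,A')>0$, with the other four separations being at least $\varepsilon$. The borderline coincidences can arise only when $\dgam(\tau_i)=\varepsilon$, and this degenerate configuration must be handled directly; it is the delicate point of the argument, so I would either treat it as a separate case or restrict attention to the strict regime $\dgam>\varepsilon$ in which the proposition is actually used.
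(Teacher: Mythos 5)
Your proof is correct and follows essentially the same route as the paper: the same cycle $1\Rightarrow 2\Rightarrow 3\Rightarrow 1$, the same triangle-inequality chain in $2\Rightarrow 3$ whose forced equalities yield $d\bigl(M(\tau_i),M(\tau_{i+1})\bigr)=\varepsilon$ and ``$M(\tau_i)$ between $L(\tau_{i+1})$ and $M(\tau_{i+1})$,'' and the same betweenness-chaining computation of $d\bigl(L(t),M(t)\bigr)$ in $3\Rightarrow 1$. Two of your side observations are genuine improvements over the paper's own write-up. First, at the closing step of $2\Rightarrow 3$ the paper cites Remark~\ref{geod-trans}, but that remark's hypotheses do not match the configuration at hand (one knows $L(\tau_{i+1})$ lies between $L(\tau_i)$ and $M(\tau_i)$, not between $L(\tau_i)$ and $M(\tau_{i+1})$); what is actually used is Definition~\ref{betweenness} applied to $L(\tau_i), L(\tau_{i+1}), M(\tau_i), M(\tau_{i+1})$, exactly as you say, and this is indeed the only place the genuine betweenness property enters. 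Second, your distinctness worry is real and the paper never addresses it: when $\dgam(\tau_i)=\varepsilon$ the good-curve conditions force $L(\tau_{i+1})=M(\tau_i)$, and then $2\Rightarrow 3$ can actually \emph{fail}, not merely resist the argument --- take Man to retrace the geodesic, $M(\tau_{i+1})=L(\tau_i)$; then statement 2 holds with both distances equal to $\varepsilon$, while ``$M(\tau_i)$ lies between $L(\tau_i)$ and $M(\tau_{i+1})$'' would read $0=2\varepsilon$. So the degenerate case cannot be ``handled directly''; your alternative of restricting to the strict regime $\dgam(\tau_i)>\varepsilon$ is the correct fix, and it also exposes a point the paper leaves implicit, since in the paper's later application to limit curves the node distance is only known to satisfy $\geq\varepsilon$.
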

\begin{proof}
Statement 2 trivially follows from statement 1.

Then let us show that statement 1 follows from statement 3. The statement~3 implies that $M(t)$ for $t $ from $\tau_i$ to  $\tau_{i+1}$ is a geodesic path joining $M(\tau_i)$ to $M(\tau_{i+1}).$ Consider an arbitrary $t \in [\tau_i, \tau_{i+1}].$ Then we get that $M(t)$ lies between $M(\tau_i)$ and $M(\tau_{i+1}).$  By this, hypothesis of statement~3 and Remark~\ref{geod-trans} we obtain that $ M(t)$ lies between  $L(\tau_i)$ and $ M(\tau_{i+1})$ too. From the definition of good curves we also know that $ L(t)$ lies between  $L(\tau_i)$ and $ M(\tau_{i+1})$ for all  $t \in [\tau_i, \tau_{i+1}].$ Thus, $L(t)$ lies between $L(\tau_i)$ and $M(t)$ too, hence by Remark~\ref{geod-trans} 
\begin{eqnarray*}
d_{\gamma}(t) = d\bigl(L(t), M(t)\bigr) =  d\bigl(L(\tau_i), M(t)\bigr) - d\bigl(L(\tau_i), L(t)\bigr)  \\
 = d\bigl(L(\tau_i), M(t)\bigr) - (t-\tau_i) =  d\bigl(L(\tau_i), M(\tau_i)\bigr) + d\bigl(M(\tau_i), M(t)\bigr) - (t-\tau_i) \\
  = d\bigl(L(\tau_i), M(\tau_i)\bigr) + (t-\tau_i) - (t-\tau_i) \\
  = d\bigl(L(\tau_i), M(\tau_i)\bigr)    = d_{\gamma}(\tau_i),
\end{eqnarray*} 
i.e. statement 1 follows from statement 3.

Let us assume statement 2 and show that statement 3 holds.
Note that, by the triangle inequality and the definition of good curves, we obtain
 $$\dgam(\tau_{i+1}) =
 d\bigl(L(\tau_{i+1}), M(\tau_{i+1})\bigr)$$
 $$ \leq
 d\bigl(L(\tau_{i+1}), M(\tau_{i})\bigr) + d\bigl(M(\tau_{i}), M(\tau_{i+1})\bigr) \leq
 d\bigl(L(\tau_{i+1}), M(\tau_{i})\bigr) + \varepsilon $$
 $$=  d\bigl(L(\tau_{i+1}), M(\tau_{i})\bigr) + d\bigl(L(\tau_i), L(\tau_{i+1})\bigr) =
  d\bigl(L(\tau_i), M(\tau_{i+1})\bigr) = \dgam(\tau_i).$$
  Due to the equality from statement 2, both these non-equality signs should be changed to equality ones, so we get $d(M(\tau_i), M(\tau_{i+1})) = \varepsilon$ (as wanted) and the equation $d\bigl(L(\tau_{i+1}), M(\tau_{i+1})\bigr) =
 d\bigl(L(\tau_{i+1}), M(\tau_{i})\bigr) + d\bigl(M(\tau_{i}), M(\tau_{i+1})\bigr) $ that implies only $M(\tau_i)$ lies between $L(\tau_{i+1})$ and $M(\tau_{i+1}).$ But by definition of good curves we also have that $L(\tau_{i+1})$ lies between  $L(\tau_i)$ and $M(\tau_i),$ hence Remark~\ref{geod-trans} gives the wanted relation: $M(\tau_i)$ lies between $L(\tau_i)$ and $M(\tau_{i+1}).$

\end{proof}

As a corollary we get the following.
\begin{remark}
\label{decrease}
\it For every good curve $\gamma$ the distance $\dgam(\cdot)$ does not increase.
\end{remark}

\begin{proposition}
\label{lem-2.2}
Let $(K,d)$ be a geodesic space satisfying the betweenness property.
If $\gamma(\cdot)=\bigl(L(\cdot), M(\cdot)\bigr) \colon [0, \tau_n] \to K$ be a good curve such that 
$\dgam(0) = \dgam(\tau_n)$ then $L(\cdot)$ on $[0, \tau_n]$ is a geodesic path.
\end{proposition}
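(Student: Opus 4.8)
The plan is to exploit Proposition~\ref{lem-1} together with Remark~\ref{decrease}. The hypothesis gives $\dgam(0)=\dgam(\tau_n)$, while Remark~\ref{decrease} tells us that $\dgam(\cdot)$ is non-increasing on the whole interval $[0,\tau_n]$. Since $\dgam$ does not increase but its values at the two endpoints coincide, $\dgam$ must be constant on every subinterval between consecutive correction moments; in particular $\dgam(\tau_i)=\dgam(\tau_{i+1})$ for each $i$ with $0\leq i<n$. First I would record this constancy explicitly: because $\dgam(0)\geq \dgam(\tau_1)\geq\cdots\geq\dgam(\tau_n)$ and the outer terms are equal, all intermediate inequalities are equalities.

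Next, for each step $i$ I would invoke the equivalence of statements 2 and 3 in Proposition~\ref{lem-1}. The equality $\dgam(\tau_i)=\dgam(\tau_{i+1})$ is exactly statement 2 of that proposition (applied to the subinterval $[\tau_i,\tau_{i+1}]\subset[0,\tau_n]$), so we obtain statement 3: namely $d\bigl(M(\tau_i),M(\tau_{i+1})\bigr)=\varepsilon$ and $M(\tau_i)$ lies between $L(\tau_i)$ and $M(\tau_{i+1})$. Combining this with the good-curve property that $L(\tau_{i+1})$ lies between $L(\tau_i)$ and $M(\tau_i)$, and using Remark~\ref{geod-trans}, I would deduce that $L(\tau_{i+1})$ lies between $L(\tau_i)$ and $M(\tau_{i+1})$, and in fact that $L(\tau_i),L(\tau_{i+1}),M(\tau_i)$ are collinear in the metric sense along one geodesic. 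The key point to extract is that consecutive Lion-segments abut without turning: the segment of $L(\cdot)$ on $[\tau_i,\tau_{i+1}]$ and the segment on $[\tau_{i+1},\tau_{i+2}]$ fit together so that $L(\tau_{i+1})$ lies between $L(\tau_i)$ and $L(\tau_{i+2})$.

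The main obstacle is this last gluing step: proving that $L(\cdot)$ is a single geodesic path on the full interval $[0,\tau_n]$, not merely geodesic on each piece $[\tau_i,\tau_{i+1}]$. On each piece $L(\cdot)$ is a geodesic by good-curve items 1 and 3 (unit speed over a length-$\varepsilon$ step). To chain these together I would argue step by step that $L(\tau_{i+1})$ lies between $L(\tau_i)$ and $L(\tau_{i+2})$, using statement 3 above and the betweenness property (Definition~\ref{betweenness}) to propagate betweenness along the chain; an induction on $i$, repeatedly applying the betweenness property and Remark~\ref{geod-trans}, should show that every $L(\tau_j)$ lies between $L(0)$ and $L(\tau_n)$, and more generally that the points $L(\tau_0),\dots,L(\tau_n)$ are ordered along one geodesic.

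Finally I would assemble the conclusion. Once all the vertices $L(\tau_0),\dots,L(\tau_n)$ are known to lie in betweenness order between $L(0)$ and $L(\tau_n)$, and each restriction $L\big|_{[\tau_i,\tau_{i+1}]}$ is a unit-speed geodesic of length $\varepsilon$, the additivity $d\bigl(L(0),L(\tau_n)\bigr)=\sum_{i} d\bigl(L(\tau_i),L(\tau_{i+1})\bigr)=n\varepsilon=\tau_n$ holds. Since $L(\cdot)$ is 1-Lipschitz on $[0,\tau_n]$ and connects two points at distance exactly $\tau_n=|\,\tau_n-0\,|$, it must be a geodesic path on the whole interval, completing the proof.
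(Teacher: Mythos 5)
Your proposal is correct in substance, but it follows a genuinely different route from the paper. The paper proves a strengthened statement by induction on the number of steps $n$: it establishes the nested chains $[L(\tau_{n}),M(\tau_{n})]\subset[L(\tau_{n-1}),M(\tau_{n})]\subset\ldots\subset[L(0),M(\tau_{n})]$ and $[L(0),M(0)]\subset\ldots\subset[L(0),M(\tau_{n})]$, and the induction step is carried by an auxiliary time-shifted curve $\eta(t)=\bigl(L(t),M(t+\tau_1)\bigr)$, which must itself be verified to be a good curve with $d_{\eta}(0)=d_{\eta}(k\varepsilon)$ so that the induction hypothesis applies to it; the anchor point throughout is Man's final position $M(\tau_n)$. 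You instead argue directly: monotonicity (Remark~\ref{decrease}) plus equality at the endpoints forces $\dgam(\tau_i)=\dgam(\tau_{i+1})$ at every step, Proposition~\ref{lem-1} (statement 2 $\Rightarrow$ 3) then applies on each subinterval, and you chain betweenness along Lion's own positions $L(\tau_0),\ldots,L(\tau_n)$ anchored at $L(0)$. This is simpler and avoids the shifted-curve construction entirely; what the paper's heavier induction buys in exchange is the stronger by-product that $M(\cdot)$ is also geodesic on $[0,\tau_n]$, which your argument does not deliver (and the statement does not require). Two details you flagged loosely but which do go through: first, the local gluing ($L(\tau_{i+1})$ between $L(\tau_i)$ and $L(\tau_{i+2})$) can indeed be extracted from Remark~\ref{geod-trans} alone, applied twice in the right orientation ($M(\tau_i)$ between $M(\tau_{i+1})$ and $L(\tau_i)$ together with $L(\tau_{i+1})$ between $M(\tau_i)$ and $L(\tau_i)$ gives $L(\tau_{i+1})$ between $L(\tau_i)$ and $M(\tau_{i+1})$, and then a second application places $L(\tau_{i+2})$ between $L(\tau_i)$ and $M(\tau_{i+1})$, after which the distances add up to give $d\bigl(L(\tau_i),L(\tau_{i+2})\bigr)=2\varepsilon$); second, the global chaining is exactly where Definition~\ref{betweenness} is indispensable and cannot be replaced by Remark~\ref{geod-trans} --- apply it to the quadruple $\bigl(L(0),L(\tau_{k-1}),L(\tau_k),L(\tau_{k+1})\bigr)$ with inductive hypothesis ``$L(\tau_{k-1})$ lies between $L(0)$ and $L(\tau_k)$'' to obtain the same relation one step further (this is consistent: on a circle, where the betweenness property fails, local collinearity does not chain). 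One small point to nail down when invoking Definition~\ref{betweenness}: it requires the four points to be pairwise distinct, so you should check distinctness (immediate here, since consecutive Lion points are $\varepsilon$ apart and the inductively established distances $d\bigl(L(0),L(\tau_k)\bigr)=k\varepsilon$ are positive) and handle separately the degenerate possibility $d\bigl(L(\tau_i),M(\tau_i)\bigr)=\varepsilon$, in which case $L(\tau_{i+1})=M(\tau_i)$ and the needed betweenness is trivial. Your final assembly (additivity gives $d\bigl(L(0),L(\tau_n)\bigr)=n\varepsilon$, and a 1-Lipschitz map realizing this distance over a time interval of length $n\varepsilon$ is a geodesic path) coincides with the paper's opening reduction.
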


\begin{proof}

Cause Lion has to move along geodesic segments all his steps it is enough to show that the points $L(\tau_0), L(\tau_1),\ldots,L(\tau_n)$ are in the following relation: $L(\tau_{n-i})$ lies between $L(\tau_{n-i-1})$ and $L(\tau_n)$ for all $i, \ 1\leq i \leq n-1.$ These relations allow us to calculate $d\bigl(L(0), L(\tau_n)\bigr)$ as the sum of $d\bigl(L(\tau_{i-1}), L(\tau_i)\bigr),$ then, since each of the addends equals $\varepsilon$ we get   $d\bigl(L(0), L(\tau_n)\bigr) = n\varepsilon$ and consequently  $L(\cdot)$ on $[0, \tau_n]$ is a geodesic path.

Further, in this proof we will use denotations $[A, B] \subset [A, C]$ or $[B, C] \subset [A, C]$ instead of the phrase `$B$ lies between $A$ and $C$' cause it  simplifies the narration. Thus, we need to proof that   
\begin{eqnarray*}
[L(\tau_{n-1}),L(\tau_{n})] \subset [L(\tau_{n-2}),L(\tau_{n})] \subset \ldots \subset [L(\tau_1), L(\tau_{n})] \subset [L(0),L(\tau_{n})].
\end{eqnarray*}
But we will prove a more strong statements: 
\begin{eqnarray*}
\label{2.2.0}
%\begin{array}{lcl}
\ d\bigl(M(\tau_i), M(\tau_{i+1})\bigr)=\varepsilon, \quad  0\leq i \leq n-1, \\
\ [L(\tau_{n}),M(\tau_{n})] \subset [L(\tau_{n-1}),M(\tau_{n})] \subset \ldots \subset [L(\tau_1), M(\tau_n)] \subset [L(0),M(\tau_{n})],\\
\ [L(0),M(0)] \subset [L(0),M(\tau_{1})] \subset \ldots  \subset [L(0), M(\tau_{n-1})] \subset [L(0),M(\tau_{n})].
\end{eqnarray*}
These relations involves  that the images of $[0, \tau_n]$ under both $L(\cdot)$ and $M(\cdot)$ are geodesic segments.

If $n=1,$ the proof is trivial by virtue of the chosen Lion's strategy.  It is a basis of induction.

Let us assume that these inclusions hold for some natural $k$. Prove the step of the induction --- the case $n=k+1.$

To do it let us define a curve $\eta(\cdot) = \bigl(\eta_L(\cdot), \eta_M(\cdot)\bigr) : [0, k\varepsilon] \rightarrow K^2$ as follows:
$$\eta_L(t)=L(t), \quad \eta_M(t)=M(t+\tau_{1}) \quad \forall t \in [0, k\varepsilon].$$
It is easy to see that $\eta(\cdot)$  satisfies Items 1 and 2 from the definition of good curves.
Consider Item 3. The definition of good curves and Proposition~\ref{lem-1}  give us the following for all $m, \ 0\leq m \leq k-1,$
\begin{eqnarray*}
[\eta_L(m\varepsilon), \eta_L((m+1)\varepsilon)]=[L(m\varepsilon), L((m+1)\varepsilon)] \ \\
\subset[L(m\varepsilon), M(m\varepsilon)] \subset [L(m\varepsilon), M((m+1)\varepsilon)] \ \\
=[\eta_L(m\varepsilon), \eta_M(m\varepsilon)].
\end{eqnarray*}
Hence, Item 3 of this definition holds too.
Let us check the last item and, at the same time, show the equality $d_{\eta}(0)=d_{\eta}(k\varepsilon).$
By the inductive hypothesis and Remark \ref{decrease} we obtain %directly have that $L|_{[\tau_{0}, \tau_{k}]}$ and $M|_{[\tau_{0}, \tau_{k}]}$ are geodesic paths, hence, for all natural $i, \ 0\leq i \leq k-1,$
\begin{eqnarray*}
d_{\eta}(0)
&=&d\bigl(L(0),M(\tau_{1})\bigr) \\
&=&d\bigl(L(\tau_{1}),M(\tau_{1})\bigr)+\varepsilon \\
&=&\dgam(\tau_1)+\varepsilon \\
&=& \dgam(\tau_i)+\varepsilon
=d_{\eta}(i\varepsilon);
\end{eqnarray*}
moreover, the restriction $\gamma|_{[\tau_{1}, \tau_{k+1}]}$ satisfies the induction hypothesis for the case $n=k$, hence $L|_{[\tau_{1}, \tau_{k+1}]}$ and $M|_{[\tau_{1}, \tau_{k+1}]}$ are geodesic paths too; that implies
\begin{eqnarray*}
d_{\eta}((k-1)\varepsilon) = &d&\bigl(L(\tau_{k-1}),M(\tau_{k})\bigr)=d\bigl(L(\tau_{k-1}),M(\tau_{k-1})\bigr)+\varepsilon \\
= &d&\bigl(L(\tau_{k}),M(\tau_{k})\bigr)+\varepsilon = d\bigl(L(\tau_{k}),M(\tau_{k+1})\bigr)=d_{\eta}\bigl(k\varepsilon\bigr).
\end{eqnarray*}
Thus, $\eta(\cdot)$ is a good curve.

So, the condition $d_{\eta}(0)=d_{\eta}(k\varepsilon)$ allows us to use induction hypothesis for the case $n=k$ to curve $\eta(\cdot).$ In this way, we get
\begin{eqnarray*}
[\eta_L(k\varepsilon), \eta_M(k\varepsilon)] \subset [\eta_L((k-1)\varepsilon), \eta_M(k\varepsilon)] \subset \ldots \subset [\eta_L(0), \eta_M(k\varepsilon)], \\
\ [\eta_L(0), \eta_M(0)] \subset [\eta_L(0), \eta_M(\varepsilon)] \subset \ldots \subset [\eta_L(0), \eta_M(k\varepsilon)].
\end{eqnarray*}

Substituting $\gamma(\cdot)$ for $\eta(\cdot),$ we obtain
\begin{eqnarray*}
[L(\tau_{k}), M(\tau_{k+1})] \subset [L(\tau_{k-1}), M(\tau_{k+1})] \subset \ldots \subset [L(0), M(\tau_{k+1})], \\
\ [L(0), M(\tau_1)] \subset [L(0), M(\tau_{2})] \subset \ldots \subset [L(0), M(\tau_{k+1})].
\end{eqnarray*}
The remaining inclusions $[L(\tau_{k+1}), M(\tau_{k+1})] \subset [L(\tau_k), M(\tau_{k+1})]$ and $[L(0), M(0)] \subset [L(0), M(\tau_{1})]$ follow from the definition of good curves and Proposition~\ref{lem-1} respectively.

The equality $d\bigl(M(\tau_i), M(\tau_{i+1})\bigr)=\varepsilon$ for $i, \ 0 \leq i \leq n-1,$ is given by $\gamma|_{[\tau_{0}, \tau_{k}]}$ and $\gamma|_{[\tau_{1}, \tau_{k+1}]}$ that satisfy induction hypothesis for the case $n=k$.
So, we get what we need.

Thus, we proved this proposition for all natural $n.$

\end{proof}

\subsection{Rounds}
\label{sect of rounds}

Recall that $\zeta$ is the $\varepsilon$-simple-pursuit curve that we chose in the beginning of the proof. 
We shall add the following useful construction.
\begin{de} The restriction of $\zeta(\cdot)$ to an interval $[\tau_{i}, \tau_{j}]$  is called a round for a set $A \subset K^2$ iff
\begin{enumerate}
\item {$\tau_i, \tau_j \in \Delta;$}
\item {$\tau_j-\tau_i>\varepsilon$, i.e. $j-i>1;$}
\item{ $\zeta ( \tau _{i} ) \in A;$}
\item{ $\zeta ( \tau _{j} ) \in A;$}
\item{ $\zeta ( \tau _k ) \notin A$ for any natural $k, \ i<k<j.$}
\end{enumerate}
\end{de}

\begin{lemma}
\label{lemma-r1}
 Let $A$  be a closed $\frac{\varepsilon}{3}$-neighbourhood of a limit point $Z^*$ of the sequence $\{\zeta(\tau_n)\}_{n=1}^{\infty};$ then, there are countably many rounds for $A.$%$B_{\frac{\varepsilon}{3}}(Z^*)$.
\end{lemma}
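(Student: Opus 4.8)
The plan is to show that the sequence $\{\zeta(\tau_n)\}$ must return to the neighbourhood $A$ of its limit point $Z^*$ infinitely often, and that between sufficiently distant returns a genuine round (of length strictly greater than $\varepsilon$) must occur. First I would invoke the definition of a limit point: since $Z^*$ is a limit point of $\{\zeta(\tau_n)\}$, the open $\frac{\varepsilon}{3}$-ball around $Z^*$ (hence the closed neighbourhood $A$) contains $\zeta(\tau_n)$ for infinitely many indices $n$. Thus there is an infinite set of indices $\{n_k\}$ with $\zeta(\tau_{n_k}) \in A$ for every $k$.

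Next I would need to produce actual rounds from these returns. The obstacle is condition 2 in the definition of a round: the gap $\tau_j - \tau_i$ must exceed $\varepsilon$, i.e. two consecutive ``entry'' and ``exit'' times used to bound a round must be separated by at least two steps with the intermediate points lying outside $A$. The key geometric fact I would use is that $A$ has radius $\frac{\varepsilon}{3}$, so its $\rho$-diameter is at most $\frac{2\varepsilon}{3} < \varepsilon$, while each Lion-step moves $\zeta$ by exactly $\varepsilon$ in the $L$-component (by Item 3 of the good-curve/strategy definition) as long as no $\varepsilon$-capture has occurred --- which holds throughout, by the standing contrary assumption that $d(\mathcal{L}(t),\mathcal{M}(t)) > \varepsilon$ for all $t$. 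Since $\rho\bigl(\zeta(\tau_k),\zeta(\tau_{k+1})\bigr) \geq d\bigl(L(\tau_k),L(\tau_{k+1})\bigr) = \varepsilon$, no two consecutive sample points can both lie in $A$; hence between any two returns to $A$ there is at least one index with $\zeta(\tau_k) \notin A$, forcing $j - i \geq 2$ whenever we take $\tau_i,\tau_j$ to be successive elements of the return-index set.

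With this in hand the construction is straightforward. Order the return indices $n_1 < n_2 < \cdots$; for each consecutive pair $(n_k, n_{k+1})$ with no return index strictly between them, the restriction $\zeta|_{[\tau_{n_k}, \tau_{n_{k+1}}]}$ satisfies conditions 1, 3, 4 immediately, condition 5 by the choice of consecutive returns, and condition 2 by the step-length argument of the previous paragraph. Since the set of return indices is infinite and no two consecutive entries coincide, this yields infinitely many disjoint (consecutive) intervals, i.e. countably many rounds for $A$.

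The main obstacle, which I would want to argue carefully, is precisely the separation bound $\tau_j - \tau_i > \varepsilon$: one must rule out the degenerate case in which $\zeta(\tau_{n_k})$ and $\zeta(\tau_{n_k+1})$ both lie in $A$ so that no genuine round of positive excess length is formed. This is exactly where the radius $\frac{\varepsilon}{3}$ (giving diameter $<\varepsilon$) and the exact step size $\varepsilon$ of the simple-pursuit strategy interact, and it is the reason the author chose $\frac{\varepsilon}{3}$ rather than an arbitrary small neighbourhood. Everything else is bookkeeping over the infinite index set.
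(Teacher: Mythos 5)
Your proof is correct and follows essentially the same route as the paper's: the key step in both is the inequality $\rho\bigl(\zeta(\tau_i),\zeta(\tau_{i+1})\bigr)\geq d\bigl(\mathcal{L}(\tau_i),\mathcal{L}(\tau_{i+1})\bigr)=\varepsilon>\operatorname{diam}(A)$, which forbids two consecutive sample points lying in $A$ and makes each return to $A$ the end of one round and the start of the next. Your write-up merely makes explicit the bookkeeping the paper leaves implicit (infinitely many returns via the limit-point property, and consecutive return indices differing by at least two, which yields condition 2 of the round definition).
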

\begin{proof}
Indeed, there are countably many points from the set $\{\zeta(\tau_{n})\ | \ n \in  \mathbb{N} \} \cap A$. But there is no  natural $i$ such that both $\zeta(\tau_i)$ and $\zeta(\tau_{i+1})$ belong to $A$ because
\begin{eqnarray}
\label{r1}
\rho\bigl(\zeta(\tau_i), \zeta(\tau_{i+1})\bigr) \geq d\bigl(\mathcal{L}(\tau_i), \mathcal{L}(\tau_{i+1})\bigr)=\varepsilon> diam(A).
\end{eqnarray}
 Then, each point like $\zeta(\tau_k) \in A$ is the start of the corresponding round and the end of previous round at the same time. So, we have countably many rounds for this $A$.
\end{proof}
\begin{proposition}
\label{sequence of rounds}
There exists a nonempty set $S \subset K^2 \ \bigl(diam(S)<\varepsilon\bigr) $ and a natural number $m \geq2$ such that there is a sequence of rounds for the set $S$ such that lengths of each of rounds are $m\varepsilon.$

\end{proposition}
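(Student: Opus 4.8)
The plan is to upgrade the conclusion of Lemma~\ref{lemma-r1} from ``infinitely many rounds'' to ``infinitely many rounds of one common length.'' First I would invoke compactness of $K^2$ to fix a limit point $Z^*$ of $\{\zeta(\tau_n)\}$ and set $A$ to be its closed $\frac{\varepsilon}{3}$-neighbourhood, so that $\mathrm{diam}(A)\le\frac{2\varepsilon}{3}<\varepsilon$; by Lemma~\ref{lemma-r1} there are countably many rounds for $A$, and by \eqref{r1} each has length $m\varepsilon$ with an integer $m\ge 2$. If these lengths happened to be bounded, the infinite pigeonhole principle would immediately produce a single value $m$ attained by infinitely many rounds, and $S=A$ would finish the proof.

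The main obstacle is exactly that the round lengths need not be bounded: the Lion may make ever longer almost-closed excursions before returning near $Z^*$, so pigeonhole on the lengths alone can fail. To force a common length regardless, I would pass to the shift picture. View the orbit $\omega=(\zeta(\tau_n))_{n\ge 0}$ as a point of the product $(K^2)^{\mathbb{N}_0}$, which is compact and metrizable by Tychonoff, and extract a subsequence of forward shifts $\sigma^{n_s}\omega$ converging coordinatewise to a limit orbit $W=(W_p)_{p\ge 0}$; thus $\zeta(\tau_{n_s+p})\to W_p$ for every fixed $p$. Note that passing \eqref{r1} to the limit gives $\rho(W_p,W_{p+1})\ge\varepsilon$, so no two consecutive entries of $W$ can lie in a common set of diameter $<\varepsilon$.

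Next I would locate a genuine first return inside $W$. Let $W^*$ be a limit point of the sequence $(W_p)_p$ in $K^2$, and among the indices with $W_p$ close to $W^*$ pick two consecutive such indices $p_a<p_b$, so that $W_p$ stays away from $W^*$ for $p_a<p<p_b$; by the previous remark $m:=p_b-p_a\ge 2$. Choosing radii carefully, set $S$ to be a small closed ball about $W^*$ so that $W_{p_a},W_{p_b}$ lie well inside $S$ while the intermediate $W_p$ lie outside $S$ with a definite margin; then $\mathrm{diam}(S)<\varepsilon$.

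Finally I would transfer this return back to $\zeta$. Since only finitely many coordinates $p\in[p_a,p_b]$ are involved and $\zeta(\tau_{n_s+p})\to W_p$ for each of them, for all large $s$ the endpoints $\zeta(\tau_{n_s+p_a})$ and $\zeta(\tau_{n_s+p_b})$ land in $S$ whereas every intermediate $\zeta(\tau_{n_s+p})$ stays out of $S$; hence $\zeta|_{[\tau_{n_s+p_a},\,\tau_{n_s+p_b}]}$ is a round for $S$ of length exactly $m\varepsilon$, and distinct $s$ yield distinct rounds. This delivers the desired $S$ with $\mathrm{diam}(S)<\varepsilon$, the integer $m\ge 2$, and a whole sequence of rounds of length $m\varepsilon$, as required for the subsequent limiting argument via Lemma~\ref{limit of good curves} and Proposition~\ref{lem-2.2}. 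I expect the delicate point to be precisely this last transfer: calibrating the radius of $S$ against the coordinatewise approximation error so that both endpoints are captured inside $S$ while all intermediate states are certified to remain outside, which is what makes the returned arcs honest rounds of a single fixed length.
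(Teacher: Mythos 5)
Your proposal is correct, but it reaches Proposition~\ref{sequence of rounds} by a genuinely different route in the hard case. The paper also starts from a limit point $F$ and the ball $B_{\frac{\varepsilon}{3}}(F)$, and also dispatches the bounded-lengths case by pigeonhole; but when round lengths are unbounded it stays combinatorial: it takes a \emph{finite} cover of $K^2$ by $\frac{\varepsilon}{3}$-balls, partitions $K^2$ into at most $N$ ``cages'' of diameter less than $\varepsilon$, encodes each round as a word in the alphabet $\{1,\dots,N\}$ in which inequality~(\ref{r1}) forbids repeated adjacent letters, and applies pigeonhole twice: within the first $N+1$ letters of each word some letter repeats, giving a subword (hence a round for the corresponding cage) of length between $2$ and $N+1$, and among countably many such bounded-length subwords infinitely many share the same length and the same starting cage. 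Your argument instead passes to the shift orbit in the compact metrizable product $(K^2)^{\mathbb{N}_0}$, extracts a limit orbit $W$, locates a first return of $W$ near one of its own limit points (with $m\ge 2$ forced by the limiting version of~(\ref{r1})), and transfers the finitely many relevant coordinates back to $\zeta$ by calibrating the radius of $S$ against the approximation error --- and your calibration is sound: choosing $\delta<\varepsilon/2$, a radius $r$ strictly between $\max\bigl\{\rho(W_{p_a},W^*),\rho(W_{p_b},W^*)\bigr\}$ and $\delta$ gives positive margins on both sides, so for large $s$ the restrictions $\zeta|_{[\tau_{n_s+p_a},\tau_{n_s+p_b}]}$ are honest rounds for $S$ of the single length $m\varepsilon$. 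Two remarks on the trade-off: your shift-limit argument actually makes the initial case split superfluous (it handles bounded and unbounded round lengths uniformly), which is tidier than the paper's two-stage structure; on the other hand, the paper's cage argument is more elementary (no product spaces or limit orbits, only finite covers and pigeonhole) and yields an explicit a priori bound $m\le N+1$ in terms of the $\frac{\varepsilon}{3}$-covering number of $K^2$, whereas your construction gives no control on $m$ --- though none is needed, since the subsequent argument in Section~\ref{sect of limit} and Subsection~\ref{sect of contradiction} only uses $m\ge 2$.
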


\begin{proof}

Let  $F$ be a limit point of the sequence $\{\zeta(\tau_n)\}_{n=1}^{\infty}.$ Consider the set of all rounds for the closed ball $B_{\frac{\varepsilon}{3}}(F),$ i.e. the closed $\frac{\varepsilon}{3}$-neighbourhood of the  point $F$.

If we find countably many rounds such that their lengths are uniformly bounded from above, then we can find  desired same-legth rounds. Suppose the contrary.  %Therefore there are countably many rounds for $B_{\frac{\varepsilon}{3}}(F)$ which have the same-legth.

In this case, we have a sequence of rounds for $B_{\frac{\varepsilon}{3}}(F)$ the lengths of which grow infinitely. We shall show that there exists another required set.% $Q \subset K$ .

Here we introduce the term {\it cage}.
Consider a finite covering of $K^2$ composed of $B_{\frac{\varepsilon}{3}}(F)$ and other $\frac{\varepsilon}{3}$-balls. Let $B_{\frac{\varepsilon}{3}}(F)$ be the ball number 1. Let us enumerate other balls from the covering as $2, 3,\ldots,N$.
{\it Cages} are defined as follows. Each point in $K^2$ gets a number that is the minimal number among the numbers of the balls from the covering containing this point. Let the $i^{th}$~cage be the set of all points that marked with the number~$i$;  this set may be empty. Note that we obtain $N$ cages and the first one coincides with the set $B_{\frac{\varepsilon}{3}}(F)$.

Let us regard each round $\zeta|_{[\tau_a, \tau_b]}$ for $B_{\frac{\varepsilon}{3}}(F)$ as a tuple $s_{a},\ldots,s_{b},$ where $s_{i} \in \{1, 2,\ldots,N\}$ is the number of the cage that contains the point $\zeta(\tau_i)$. Note that $s_{a}=s_{b}=1 $ and $ s_{i} \neq 1$ for all $i,\ a<i<b.$ 
%Moreover, by inequalities~(\ref{r1}) from the proof of Lemma~\ref{lemma-r1},
Note that  inequalities~(\ref{r1}) hold, consequently
 two neighbours $\tau_i$ and $\tau_{i+1}$ belong to different cages; hence, in such a tuple all neighbour symbols should be different.

Since we have at most $N$ cages, it follows that in each tuple there exist two equal numbers among the first $N+1$ symbols;   if the length of a tuple is less than $N+1,$ the first and the last elements of this tuple, for instance, are such equal numbers.   Due to the paragraph above, these equal numbers can not be neighbours in a tuple. Thus, we can consider `subtuples' between the nearest equal numbers instead of the whole tuples. We get that the lengths of these subtuples are more than 2 and less than $N+2$. Since we have countably many subtuples with uniformly bounded lengths, we can find countably many subtuples with the same-legth. In the same way each subtuple starts with a symbol from $\{1, 2,\ldots,N\}$. So we can find  countably many subtuples starting with the same number, say number $k$. It means that the corresponding curves start and finish in the $k^{th}$ cage. Thus, we have countably many same-length restrictions of $\zeta(\cdot)$. Note that they are rounds for the $k^{th}$ cage. Recall that the diameter of $k^{th}$ cage is not more than~$\varepsilon$.

Thus, we obtain countably many same-length rounds for the required set.
\end{proof}

\subsection{Limit Curve}
\label{sect of limit}

Let $\{\zeta|_{[\tau_{i_n}, \tau_{i_n}+m\varepsilon]} \}_{n=1}^{\infty}$ be a sequence of rounds for a set $S$ from Proposition~\ref{sequence of rounds}.
Consider the sequence of $\zeta^n(\cdot) = \bigl(\zeta^n_L(\cdot), \zeta^n_M(\cdot)\bigr): [0, m\varepsilon] \rightarrow K^2$ such that $$\zeta^n(t)=\zeta(\tau_{i_n}+t) \qquad \forall t \in [0, m\varepsilon].$$

Since $\zeta^n_L(\cdot)$ and $\zeta^n_M(\cdot)$ are 1-Lipschitz curves and thanks to Arzela--Askoli theorem, this sequence of $\zeta^n$ has a subsequence converging to a continuous map. Let the corresponding subsequences of $\zeta^n_L(\cdot)$ and $\zeta^n_M(\cdot)$ converge to continuous maps $\zeta_L^{\ast}(\cdot)$ and $\zeta_M^{\ast}(\cdot)$. Set $\zeta^{\ast}(\cdot) = \bigl(\zeta_L^{\ast}(\cdot), \zeta_M^{\ast}(\cdot)\bigr).$

\begin{proposition}
\label{limit}
The following statements hold:  \\
1. $\zeta^{\ast}(\cdot)$ is a good curve; \\
2. $d\bigl(\zeta_L^{\ast}(0), \zeta_L^{\ast}(m\varepsilon)\bigr) <\varepsilon$; \\
3. $d\bigl(\zeta_L^{\ast}(0), \zeta_M^{\ast}(0)\bigr)=d\bigl(\zeta_L^{\ast}(m\varepsilon), \zeta_M^{\ast}(m\varepsilon)\bigr).$
\end{proposition}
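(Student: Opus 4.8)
The plan is to prove the three assertions separately, relying throughout on the properties of good curves established above. The first observation I would record is that every shifted restriction $\zeta^n(\cdot)$ is itself a good curve on $[0,m\varepsilon]=[0,\tau_m]$: its correction moments $0,\varepsilon,\dots,m\varepsilon$ correspond to the moments $\tau_{i_n},\tau_{i_n+1},\dots,\tau_{i_n+m}$ of the fixed $\varepsilon$-simple-pursuit curve $\zeta$, and since $d\bigl(\mathcal{L}(t),\mathcal{M}(t)\bigr)>\varepsilon$ for all $t\geq 0$ by the standing no-capture assumption, the simple-pursuit prescription forces Items~2 and~3 of the definition of a good curve at each of these moments, while Item~1 (1-Lipschitz continuity) holds by construction and Item~4 holds because $d\bigl(\zeta^n_L(\tau_i),\zeta^n_M(\tau_i)\bigr)=d\bigl(\mathcal{L}(\tau_{i_n}+\tau_i),\mathcal{M}(\tau_{i_n}+\tau_i)\bigr)\geq\varepsilon$. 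With this in hand, statement~1 is immediate from Lemma~\ref{limit of good curves}: the sequence $\zeta^n(\cdot)$ of good curves on $[0,\tau_m]$ has a subsequence converging in $\rho$ (hence uniformly) to a good curve, and after passing to that subsequence the limit is exactly $\zeta^{\ast}(\cdot)$.

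For statement~2 I would use the defining property of the set $S$ from Proposition~\ref{sequence of rounds}. Each round starts and ends in $S$, so $\zeta^n(0),\zeta^n(m\varepsilon)\in S$ and therefore $\rho\bigl(\zeta^n(0),\zeta^n(m\varepsilon)\bigr)\leq diam(S)<\varepsilon$; in particular $d\bigl(\zeta^n_L(0),\zeta^n_L(m\varepsilon)\bigr)\leq diam(S)$. Since $\zeta^n_L\to\zeta^{\ast}_L$ uniformly and $diam(S)$ is a fixed number strictly below $\varepsilon$, passing to the limit gives $d\bigl(\zeta^{\ast}_L(0),\zeta^{\ast}_L(m\varepsilon)\bigr)\leq diam(S)<\varepsilon$, which is exactly the required inequality.

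The heart of the argument, and the step I expect to be the main obstacle, is statement~3, which I would read off the single non-increasing distance function of $\zeta$. Writing $g(t)=d\bigl(\mathcal{L}(t),\mathcal{M}(t)\bigr)$, Remark~\ref{decrease} applied to $\zeta$ shows $g$ is non-increasing, and the no-capture assumption gives $g(t)\geq\varepsilon$, so $g$ is bounded below and $g_{\infty}:=\lim_{t\to\infty}g(t)$ exists. By construction $d\bigl(\zeta^n_L(0),\zeta^n_M(0)\bigr)=g(\tau_{i_n})$ and $d\bigl(\zeta^n_L(m\varepsilon),\zeta^n_M(m\varepsilon)\bigr)=g(\tau_{i_n}+m\varepsilon)$. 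The delicate point is that the start times must tend to infinity: the rounds are infinitely many distinct restrictions of $\zeta$ to intervals of the fixed length $m\varepsilon$, hence their starting moments $\tau_{i_n}$ are pairwise distinct, and since only finitely many correction moments lie in any bounded interval these starts are unbounded, so after passing to a further subsequence we may assume $\tau_{i_n}\to\infty$. Then both $g(\tau_{i_n})$ and $g(\tau_{i_n}+m\varepsilon)$ converge to $g_{\infty}$. Finally, the uniform convergence $\zeta^n\to\zeta^{\ast}$ together with continuity of $d$ yields $d\bigl(\zeta^{\ast}_L(0),\zeta^{\ast}_M(0)\bigr)=g_{\infty}=d\bigl(\zeta^{\ast}_L(m\varepsilon),\zeta^{\ast}_M(m\varepsilon)\bigr)$, establishing statement~3 and completing the proof.
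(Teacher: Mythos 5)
Your proposal is correct and follows essentially the same route as the paper's own proof: statement~1 via Lemma~\ref{limit of good curves}, statement~2 via $\zeta^n(0),\zeta^n(m\varepsilon)\in S$ and $diam(S)<\varepsilon$, and statement~3 via monotonicity and boundedness of the distance $t\mapsto d\bigl(\mathcal{L}(t),\mathcal{M}(t)\bigr)$, which forces the distances at the two ends of the rounds to have a common limit. Your write-up is in fact slightly more careful than the paper's, since you explicitly verify that the shifted restrictions $\zeta^n(\cdot)$ are good curves and that the starting moments $\tau_{i_n}$ tend to infinity, both of which the paper leaves implicit.
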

\begin{proof}
1. The first statement follows from Lemma~\ref{limit of good curves}.

2. Since all $\zeta_n(0)$ and $\zeta_n(m\varepsilon)$ belong to $S,$ it follows that $\zeta^{\ast}(0)$ and $\zeta^{\ast}(m\varepsilon)$ belong to the closure of this set. Hence, $$d\bigl(\zeta^{\ast}_L(0), \zeta^{\ast}_L(m\varepsilon)\bigr) \leq \rho \bigl(\zeta^{\ast}(0), \zeta^{\ast}(m\varepsilon)\bigr) \leq diam(S) < \varepsilon.$$

3. We know that $d\bigl(\mathcal{L}(\tau_k), \mathcal{M}(\tau_k)\bigr) - d\bigl(\mathcal{L}(\tau_{k+m}), \mathcal{M}(\tau_{k+m})\bigr)\rightarrow 0$ as $k \rightarrow \infty$ because the distance between the players  does not increase and is positive.
Then, $d\bigl(\zeta_L^{n}(0), \zeta_M^{n}(0)\bigr)-d\bigl(\zeta_L^{n}(m\varepsilon), \zeta_M^{n}(m\varepsilon)\bigr) \rightarrow 0$ as $n \rightarrow \infty.$ Thus, $d\bigl(\zeta_L^{\ast}(0), \zeta_M^{\ast}(0)\bigr)=d\bigl(\zeta_L^{\ast}(m\varepsilon),\zeta_M^{\ast}(m\varepsilon)\bigr).$% we get that it remains a constant.

\end{proof}

\subsection{The Last Component}
\label{sect of contradiction}

%Note the following fact.
By this subsection we only need to show the contradiction to the original assumption.
%Recall that we assumed that there exists a 1-Lipschitz continuous  curve $ \mathcal{M}(\cdot) $ such that if Lion follows the simple pursuit strategy then Man will never be captured.
Recall that we assumed that there exists an $\varepsilon$-simple-pursuit curve $\zeta \colon \mathbb{R}_{+} \to K^2$ which free of $\varepsilon$-capture.

From Proposition~\ref{lem-2.2} and the last statement of Proposition~\ref{limit},
the image of $[0, m\varepsilon]$ under $\zeta_L^{\ast}(\cdot)$ is a geodesic segment with endpoints $\zeta_L^{\ast}(0)$ and  $\zeta_L^{\ast}(m\varepsilon)$.
Hence, we can calculate the distance $d\bigl(\zeta_L^{\ast}(0), \zeta_L^{\ast}(m\varepsilon)\bigr)$ with help of the points lying between $\zeta_L^{\ast}(0)$ and $\zeta_L^{\ast}(m\varepsilon):$
\begin{eqnarray*}
d\bigl(\zeta_L^{\ast}(0), \zeta_L^{\ast}(m\varepsilon)\bigr)
&=&d\bigl(\zeta_L^{\ast}(0), \zeta_L^{\ast}(\varepsilon)\bigr)+d\bigl(\zeta_L^{\ast}(\varepsilon), \zeta_L^{\ast}(m\varepsilon)\bigr) \\
&=&d\bigl(\zeta_L^{\ast}(0), \zeta_L^{\ast}(\varepsilon)\bigr)+d\bigl(\zeta_L^{\ast}(\varepsilon), \zeta_L^{\ast}(2\varepsilon)\bigr)+d\bigl(\zeta_L^{\ast}(2\varepsilon), \zeta_L^{\ast}(m\varepsilon)\bigr) \\
& \ldots & \\
&=&d\bigl(\zeta_L^{\ast}(0), \zeta_L^{\ast}(\varepsilon)\bigr)+\ldots+d\bigl(\zeta_L^{\ast}((m-1)\varepsilon), \zeta_L^{\ast}(m\varepsilon)\bigr) \\
&=&m\varepsilon
\geq 2\varepsilon,
\end{eqnarray*}
where the inequality follows from Proposition~\ref{sequence of rounds}. 

But it contradicts with  Statement 2 of Proposition~\ref{limit}. Thus, the assumption is false. In other words, at some time the distance between players will be less than $\varepsilon$ and $\varepsilon$-capture will take place.

%\begin{acknowledgements}
%If you'd like to thank anyone, place your comments here
%and remove the percent signs.
%\end{acknowledgements}

% BibTeX users please use one of
%\bibliographystyle{spbasic}      % basic style, author-year citations
%\bibliographystyle{spmpsci}      % mathematics and physical sciences
%\bibliographystyle{spphys}       % APS-like style for physics
%\bibliography{bib}   % name your BibTeX data base

\end{document}